\theoremstyle{plain}
\newtheorem{theorem}{Theorem}
\newtheorem{proposition}[theorem]{Proposition}
\newtheorem{corollary}[theorem]{Corollary}
\theoremstyle{definition}
\newtheorem{definition}[theorem]{Definition}
\newtheorem{example}[theorem]{Example}
\newcommand{\wt}[1]{\mathrm{wt}(#1)}
\newcommand{\supp}[1]{\mathrm{supp}(#1)}
\newcommand{\GF}[1]{\mathrm{GF}(#1)}
\newcommand{\PG}[1]{\mathrm{PG}(#1)}
\newcommand{\AG}[1]{\mathrm{AG}(#1)}
\title{Weight enumeration of codes from finite spaces}
\author{Relinde Jurrius}
\begin{document}

\maketitle

\begin{abstract}
We study the generalized and extended weight enumerator of the $q$-ary Simplex code and the $q$-ary first order Reed-Muller code. For our calculations we use that these codes correspond to a projective system containing all the points in a finite projective or affine space. As a result from the geometric method we use for the weight enumeration, we also completely determine the set of supports of subcodes and words in an extension code. \\
Keywords: Coding theory, Weight enumeration, Finite geometry
\end{abstract}

\section{Introduction}

The weight enumerator and the set of supports are important invariants of a linear code. Besides their intrinsic importance as mathematical objects, they are used in the probability theory involved with different ways of decoding. In this paper we will determine the generalized and extended weight enumerator, as well as the set of supports, of codes with a geometric structure. \\

The geometrical method for determining the generalized weight enumerator is described by Tsfasman and Vl\v{a}dut in \cite{tsfasman:1995}. Several examples are worked out in the paper. We will apply their methods to another class of codes, namely codes that correspond to a projective system containing all the points in a finite projective or affine space. This are the $q$-ary Simplex code and the $q$-ary first order Reed-Muller code. \\
After finding explicit formulas for the generalized weight enumerator of these codes, we will apply the correspondence with the extended weight enumerator in \cite{jurrius:2011} to find explicit formulas for the extended weigh enumerator of codes coming from finite projective and affine spaces. This formulas were found before by Mphako \cite{mphako:2000}, using the theory of matroids and geometric lattices. \\

As a result from the method used in this paper, we will not only find information about the weight distribution of the codes, but also about the geometric structure of their set of supports. It turns out that the complements of supports form the incidence vectors of points and finite subspaces. This result could be helpful in studying the dimension of a design (see \cite{tonchev:1999}) or the weight enumeration of the higher order Reed-Muller codes (see \cite{assmus:1992a}).

\section{Codes and weights}

More details about the theory in this section can be found in Section 5 of \cite{jurrius:2011}.

\subsection{Generalized weight enumerator}

We start with generalizing the weight distribution in the following way, first formulated by Kl{\o}ve \cite{klove:1978,klove:1992} and re-discovered by Wei \cite{wei:1991}. Let $C$ be a linear $[n,k]$ code over $\GF{q}$. Instead of looking at words of $C$, we consider all the subcodes of $C$ of a certain dimension $r$. The \emph{support of a subcode} is the union of the supports of all words in the subcode. The \emph{weight of a subcode} is the size of its support. It is equal to $n$ minus the number of coordinates that are zero for every word in the subcode. The smallest weight for which a subcode of dimension $r$ exists, is called the \emph{$r$-th generalized Hamming weight} of $C$ and denoted by $d_r$. For each $r$ we can define the \emph{$r$-th generalized weight distribution} of the code, that forms the coefficients of the following polynomial.
\begin{definition}
The \emph{generalized weight enumerator} is given by
\[ W^{(r)}_C(X,Y)=\sum_{w=0}^nA^{(r)}_wX^{n-w}Y^w, \]
where $A^{(r)}_w=|\{D\subseteq C:\dim D=r, \wt{D}=w\}|$.
\end{definition}
We can see from this definition that $A^{(0)}_0=1$ and $A^{(r)}_0=0$ for all $0<r\leq k$. Furthermore, every 1-dimensional subspace of $C$ contains $q-1$ non-zero codewords, so $(q-1)A^{(1)}_w$ is the number of words of weight $w$ for $0<w\leq n$.

\subsection{Extended weight enumerator}

Let $C$ be a linear $[n,k]$ code over $\GF{q}$ with generator matrix $G$. Then we can form the $[n,k]$ code $C\otimes\GF{q^m}$ over $\GF{q^m}$ by taking all $\GF{q^m}$-linear combinations of the codewords in $C$. We call this the \emph{extension code} of $C$ over $\GF{q^m}$. By embedding its entries in $\GF{q^m}$, we find that $G$ is also a generator matrix for the extension code $C\otimes\GF{q^m}$. This motivates the usage of $T$ as a variable for $q^m$ in the next definition.
\begin{definition}
The \emph{extended weight enumerator} is the polynomial
\[ W_C(X,Y,T)= \sum_{w=0}^{n} A_w(T) X^{n-w}Y^w \]
where the $A_w(T)$ are integral polynomials in $T$ and $A_w(q^m)$ is the number of codewords of weight $w$ in $C\otimes\GF{q^m}$.
\end{definition}
We will omit the proof that the $A_w(T)$ are indeed polynomials of degree at most $k$.

\subsection{Connections}

There is a connection between the extended and generalized weight enumerator.
\begin{theorem}\label{ewe-gwe}
Let $C$ be a linear $[n,k]$ code over $\GF{q}$. Then the extended weight numerator and the generalized weight enumerators are connected via
\[ W_C(X,Y,T)=\sum_{r=0}^k\left(\prod_{j=0}^{r-1}(T-q^j)\right)W^{(r)}_C(X,Y). \]
\end{theorem}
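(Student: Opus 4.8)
The plan is to count the codewords of each weight $w$ in the extension code $C\otimes\GF{q^m}$ by organizing them according to a subcode of $C$ that they determine, thereby obtaining the claimed identity at every value $T=q^m$, and then to invoke the fact (assumed above) that the $A_w(T)$ are polynomials to upgrade this to an equality of polynomials via agreement at infinitely many points.

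First I would set up a correspondence between codewords of the extension code and tuples of codewords of $C$. Fix a basis $\epsilon_1,\dots,\epsilon_m$ of $\GF{q^m}$ over $\GF{q}$. Any $c\in C\otimes\GF{q^m}$ can be written uniquely as $c=\sum_{s=1}^m\epsilon_s c_s$ with $c_s\in C$; since $G$ generates both $C$ and $C\otimes\GF{q^m}$ with the same rank $k$, this identifies $c$ bijectively with a tuple $(c_1,\dots,c_m)\in C^m$. The key observation is that a coordinate $i$ is zero in $c$ if and only if it is zero in every $c_s$, because the entries $(c_s)_i$ lie in $\GF{q}$ and the $\epsilon_s$ are $\GF{q}$-linearly independent. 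Consequently $i$ is a zero coordinate of $c$ exactly when it is a zero coordinate of every word of the subcode $D:=\langle c_1,\dots,c_m\rangle\subseteq C$, so that $\supp{c}=\supp{D}$ and hence $\wt{c}=\wt{D}$.

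Next I would count, for a fixed subcode $D\subseteq C$ with $\dim D=r$, how many tuples $(c_1,\dots,c_m)\in C^m$ span exactly $D$. Identifying $D$ with $\GF{q}^r$, this is the number of $m$-tuples of vectors spanning an $r$-dimensional $\GF{q}$-space, equivalently the number of $r\times m$ matrices of rank $r$ over $\GF{q}$, namely $\prod_{j=0}^{r-1}(q^m-q^j)$ (choose $r$ linearly independent rows in $\GF{q}^m$). Partitioning the weight-$w$ codewords of $C\otimes\GF{q^m}$ according to the subcode $D$ they generate, and using $\wt{c}=\wt{D}$, gives
\[ A_w(q^m)=\sum_{r=0}^k\left(\prod_{j=0}^{r-1}(q^m-q^j)\right)A^{(r)}_w. \]
Replacing $q^m$ by $T$, multiplying by $X^{n-w}Y^w$ and summing over $w$ then yields the stated formula.

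The main obstacle is the book-keeping around the correspondence: verifying that $c\mapsto(c_1,\dots,c_m)$ is genuinely a bijection onto $C^m$, that grouping by the generated subcode is a true partition (each $c$ has a unique span $D$, and each $D$ together with a spanning tuple yields a unique $c$), and that the weight is correctly transferred through $\wt{c}=\wt{D}$. Once these are in place, the enumeration of spanning tuples is routine, and the final step is the purely formal passage from an identity valid at all $T=q^m$ to an identity of polynomials, legitimate because both sides are polynomials in $T$ agreeing at infinitely many values.
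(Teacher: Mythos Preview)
Your proposal is correct and follows essentially the same route as the paper: the paper likewise sets up the $\GF{q}$-linear bijection $C\otimes\GF{q^m}\cong C^m$ via a basis of $\GF{q^m}$ over $\GF{q}$ (Proposition~\ref{bij}), transfers the weight of $c$ to the weight of the subcode spanned by its rows (Corollary~\ref{gen-ext-wt}), counts the number of $m$-tuples generating a fixed $r$-dimensional subcode to obtain $A_w(q^m)=\sum_r A_w^{(r)}\prod_{j=0}^{r-1}(q^m-q^j)$, and then passes to the polynomial identity by Lagrange interpolation. Your write-up is in fact slightly more explicit about the book-keeping than the paper itself.
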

We need the following proposition to prove the case $T=q^m$.
\begin{proposition}\label{bij}
Let $C$ be a $[n,k]$ code over $\GF{q}$, and let $C^m$ be the linear subspace consisting of the $m\times n$ matrices over $\GF{q}$ whose rows are in $C$. Then there is an isomorphism of $\GF{q}$-vector spaces between $C\otimes\GF{q^m}$ and $C^m$.
\end{proposition}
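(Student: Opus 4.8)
The plan is to exhibit an explicit isomorphism by expanding coordinates with respect to a fixed $\GF{q}$-basis of $\GF{q^m}$, and then to check that this expansion carries $C\otimes\GF{q^m}$ precisely onto $C^m$. First I would fix an ordered basis $\alpha_1,\dots,\alpha_m$ of $\GF{q^m}$ over $\GF{q}$. Every $a\in\GF{q^m}$ then has a unique expansion $a=\sum_{i=1}^m a_i\alpha_i$ with $a_i\in\GF{q}$, so for a vector $\mathbf{c}=(c_1,\dots,c_n)\in\GF{q^m}^n$ I can write each entry as $c_j=\sum_{i=1}^m c_{ij}\alpha_i$ and define $\phi(\mathbf{c})$ to be the $m\times n$ matrix $(c_{ij})$ over $\GF{q}$. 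Since the expansion is unique and $\GF{q}$-linear in $a$, the map $\phi\colon\GF{q^m}^n\to\GF{q}^{m\times n}$ is a $\GF{q}$-linear isomorphism; this part is routine and I would dispatch it quickly.

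The real content is to show $\phi(C\otimes\GF{q^m})=C^m$. Using the generator matrix $G$ (whose entries lie in $\GF{q}$), every word of $C\otimes\GF{q^m}$ is $\mathbf{x}G$ for some $\mathbf{x}\in\GF{q^m}^k$; expanding $\mathbf{x}=\sum_i\mathbf{x}_i\alpha_i$ with $\mathbf{x}_i\in\GF{q}^k$ gives $\mathbf{x}G=\sum_i(\mathbf{x}_iG)\alpha_i$, and since each $\mathbf{x}_iG\in C\subseteq\GF{q}^n$, the uniqueness of the expansion shows that the $i$-th row of $\phi(\mathbf{x}G)$ is exactly $\mathbf{x}_iG\in C$. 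Hence $\phi(C\otimes\GF{q^m})\subseteq C^m$. For the reverse inclusion, given a matrix in $C^m$ with rows $\mathbf{r}_1,\dots,\mathbf{r}_m\in C$, the vector $\sum_i\mathbf{r}_i\alpha_i$ lies in $C\otimes\GF{q^m}$ and maps to that matrix under $\phi$, so $\phi$ is onto $C^m$. Restricting the isomorphism $\phi$ then yields the desired $\GF{q}$-linear isomorphism $C\otimes\GF{q^m}\xrightarrow{\sim}C^m$.

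The one point requiring care—and the one I would emphasize—is that the correspondence is $\GF{q}$-linear but not $\GF{q^m}$-linear, so I must genuinely verify both inclusions rather than rely on a dimension count alone; the crucial ingredient making the image land inside $C^m$ is that $G$ has entries in the base field $\GF{q}$, which is exactly what lets the single expansion of $\mathbf{x}$ split $\mathbf{x}G$ row by row into codewords of $C$. As a sanity check one may note that both spaces have $\GF{q}$-dimension $mk$, but I would still prove surjectivity directly as above, since that argument also makes transparent that $\phi$ preserves supports—the $j$-th coordinate of $\mathbf{c}$ is nonzero if and only if the $j$-th column of $\phi(\mathbf{c})$ is nonzero—which is precisely the feature that makes the proposition usable in the proof of Theorem~\ref{ewe-gwe}.
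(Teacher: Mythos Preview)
Your argument is correct and follows essentially the same approach as the paper: fix a $\GF{q}$-basis of $\GF{q^m}$, expand each coordinate of a word in $C\otimes\GF{q^m}$ with respect to that basis, and read off the rows of the resulting $m\times n$ matrix. The only cosmetic difference is that the paper chooses the specific power basis $(1,\alpha,\alpha^2,\ldots,\alpha^{m-1})$ for a primitive element $\alpha$, whereas you allow an arbitrary basis; you also supply the verification (using that $G$ has entries in $\GF{q}$) that the rows land in $C$ and that the map is onto $C^m$, details the paper explicitly omits.
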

\begin{proof}
Choose a primitive $m$-th root of unity $\alpha\in\GF{q^m}$. For a word in $C\otimes\GF{q^m}$, write all the coordinates on the basis $(1,\alpha,\alpha^2,\ldots,\alpha^{m-1})$. This gives an $m\times n$ matrix over $\GF{q}$ whose rows are words of $C$. We will omit the details showing this is an isomorphism.
\end{proof}

Note that this isomorphism depends on the choice of a primitive element $\alpha$. The use of this isomorphism for the proof of Theorem \ref{ewe-gwe} was suggested in \cite{simonis:1993} by Simonis.
\begin{corollary}\label{gen-ext-wt}
Let $\mathbf{c}\in C\otimes\GF{q^m}$ and $M\in C^m$ the corresponding $m\times n$ matrix under a given isomorphism. Let $D\subseteq C$ be the subcode generated by $M$. Then $\supp{\mathbf{c}}=\supp{D}$ and hence $\wt{\mathbf{c}}=\wt{D}$.
\end{corollary}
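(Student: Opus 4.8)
The plan is to trace through the explicit isomorphism of Proposition~\ref{bij} and reduce both supports to the same combinatorial condition on the columns of $M$. Fix the basis $(1,\alpha,\alpha^2,\ldots,\alpha^{m-1})$ of $\GF{q^m}$ over $\GF{q}$ used to define the isomorphism, and write $\mathbf{c}=(c_1,\ldots,c_n)$. By construction the $i$-th coordinate expands as $c_i=\sum_{j=0}^{m-1}M_{ji}\alpha^j$ with $M_{ji}\in\GF{q}$, so the $i$-th column of $M$ records exactly the coordinates of $c_i$ on this basis.

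First I would characterize $\supp{\mathbf{c}}$ in terms of $M$. Since $(1,\alpha,\ldots,\alpha^{m-1})$ is a $\GF{q}$-basis of $\GF{q^m}$, the scalar $c_i$ vanishes if and only if all coefficients $M_{0i},\ldots,M_{m-1,i}$ vanish, that is, if and only if the $i$-th column of $M$ is the zero column. Hence $i\in\supp{\mathbf{c}}$ precisely when column $i$ of $M$ is nonzero.

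Next I would give the parallel characterization of $\supp{D}$. The subcode $D$ is spanned by the rows of $M$, and $\supp{D}$ is by definition the union of the supports of all words of $D$. If column $i$ of $M$ is zero, then every row, and hence every $\GF{q}$-linear combination of rows, has a zero in position $i$, so $i\notin\supp{D}$; conversely, if column $i$ is nonzero, some row of $M$ is nonzero in position $i$, and that row is itself a word of $D$ witnessing $i\in\supp{D}$. Therefore $i\in\supp{D}$ if and only if column $i$ of $M$ is nonzero, the same condition obtained for $\mathbf{c}$.

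Combining the two characterizations gives $\supp{\mathbf{c}}=\supp{D}$, and taking cardinalities yields $\wt{\mathbf{c}}=\wt{D}$. The argument is essentially routine; the one point requiring care is the description of $\supp{D}$, where one must observe that a coordinate lies in the support of the subcode exactly when it is nonzero in \emph{some} generating row, which is what lets the union-of-supports definition collapse to the simple nonzero-column condition.
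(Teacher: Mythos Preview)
Your argument is correct and is exactly the natural unpacking of the isomorphism from Proposition~\ref{bij}: both $\supp{\mathbf{c}}$ and $\supp{D}$ reduce to the set of nonzero columns of $M$, using the $\GF{q}$-linear independence of $1,\alpha,\ldots,\alpha^{m-1}$ for the first and the fact that the rows of $M$ generate $D$ for the second. The paper itself gives no proof for this corollary, treating it as immediate from the construction; your write-up simply supplies the omitted details, and there is no alternative route to compare against.
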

A counting-argument gives the next result.
\begin{proposition}
Let $C$ be a $[n,k]$ code over $\GF{q}$. Then the weight numerator of an extension code and the generalized weight enumerators are connected via
\[ A_w(q^m)=\sum_{r=0}^mA^{(r)}_w \prod_{i=0}^{r-1}(q^m-q^i). \]
\end{proposition}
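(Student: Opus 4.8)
The plan is to reduce the statement to a pure counting problem over $\GF{q}$ by means of Proposition \ref{bij} and Corollary \ref{gen-ext-wt}. Fixing the isomorphism $C\otimes\GF{q^m}\cong C^m$ of Proposition \ref{bij}, a codeword $\mathbf{c}$ of the extension code corresponds to a matrix $M\in C^m$, and by Corollary \ref{gen-ext-wt} we have $\wt{\mathbf{c}}=\wt{D}$, where $D\subseteq C$ is the subcode generated by the rows of $M$. Consequently $A_w(q^m)$, the number of codewords of weight $w$ in $C\otimes\GF{q^m}$, equals the number of matrices $M\in C^m$ whose row space $D$ satisfies $\wt{D}=w$. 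This is the quantity I will count.

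First I would partition the matrices $M\in C^m$ according to the subcode $D$ they generate, so that
\[ A_w(q^m)=\sum_{\substack{D\subseteq C \\ \wt{D}=w}}N(D), \]
where $N(D)$ is the number of $M\in C^m$ with row space exactly $D$. The key step is to show that $N(D)$ depends only on $r=\dim D$. Fixing a basis of $D$ lets me write each admissible $M$ uniquely as $M=AB$, where $B$ is the $r\times n$ matrix of basis vectors and $A$ is an $m\times r$ matrix over $\GF{q}$; since $B$ has full row rank, the rows of $M$ span $D$ if and only if $A$ has rank $r$. Hence $N(D)$ is the number of $m\times r$ matrices over $\GF{q}$ of rank $r$, equivalently the number of $r$-tuples of linearly independent vectors in $\GF{q}^m$, which is $\prod_{i=0}^{r-1}(q^m-q^i)$ and indeed independent of $D$.

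Combining these, I would group the sum by dimension and use $A^{(r)}_w=|\{D\subseteq C:\dim D=r,\ \wt{D}=w\}|$ to obtain
\[ A_w(q^m)=\sum_{r}A^{(r)}_w\prod_{i=0}^{r-1}(q^m-q^i). \]
The summation range is handled by noting that a row space generated by $m$ vectors has dimension at most $\min(m,k)$, so the terms with $r>\min(m,k)$ contribute nothing: those with $r>k$ have $A^{(r)}_w=0$, and the upper limit $m$ in the stated formula therefore only appends vanishing terms. I expect the main obstacle to be the per-subcode count $N(D)$: one must argue carefully that the factorization $M=AB$ is a bijection onto the rank-$r$ matrices $A$ and that this count is the same for every $r$-dimensional $D$, since it is precisely this uniformity that allows the generalized weight distribution $A^{(r)}_w$ to factor out of the sum. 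As a consistency check, the resulting identity is exactly the coefficient of $X^{n-w}Y^w$ obtained by setting $T=q^m$ in Theorem \ref{ewe-gwe}.
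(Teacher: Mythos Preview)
Your proposal is correct and follows precisely the route the paper has in mind: the paper states only that ``a counting-argument gives the next result,'' having set up Proposition~\ref{bij} and Corollary~\ref{gen-ext-wt} for exactly this purpose. Your expansion---partitioning $C^m$ by the row space $D$, identifying $N(D)$ with the number of full-rank $m\times r$ matrices over $\GF{q}$ via the factorization $M=AB$, and then regrouping by $\dim D$---is the intended counting argument spelled out in full.
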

This result first appears in \cite[Theorem 3.2]{helleseth:1977}. It gives Theorem \ref{ewe-gwe} by Lagrange interpolation.

\subsection{A geometric point of view}

A \emph{projective system} $\mathcal{P}=(P_1,\ldots,P_n)$ in $\PG{r,q}$, the projective space over $\GF{q}$ of dimension $r$, is an enumeration of points $P_j$ in this projective space, such that not all these points lie in a hyperplane. Let $P_j$ be given by be homogeneous coordinates $(p_{0j}:p_{1j}:\cdots :p_{rj})$ and let $G_{\mathcal{P}}$ be the $(r+1)\times n$ matrix with $(p_{0j},p_{1j},\ldots ,p_{rj})^T$ as $j$-th column. Then $G_{\mathcal{P}}$ is the generator matrix of a nondegenerate linear code over $\GF{q}$ of length $n$ and dimension $r+1$, since not all points lie in a hyperplane. \\
Conversely, let $G$ be a generator matrix of a nondegenerate linear $[n,k]$ code $C$ over $\GF{q}$, so $G$ has no zero columns. Take the columns of $G$ as homogeneous coordinates of points in $\PG{s-1,q}$. This gives the projective system $\mathcal{P}_G$ over $\GF{q}$ of $G$. \\

From this two notions it can be shown that there is a one-to-one correspondence between generalized equivalence classes of linear $[n,k]$ codes, and equivalence classes of projective systems with $n$ points in $\PG{k-1,q}$. See \cite{katsman:1987,tsfasman:1991}. \\

Remember we can write a codeword $\mathbf{c}\in C$ as $\mathbf{c}=\mathbf{x}G$, with $\mathbf{x}\in\GF{q}^k$. The $i$-th coordinate of $\mathbf{c}$ is zero if and only if the standard inner product of $\mathbf{x}$ and the $i$-th column of $G$ is zero. So in terms of projective systems, $P_i$ is in the hyperplane perpendicular to $\mathbf{x}$. We can generalize this to subcodes of $C$. \\
Let $\Pi$ be a subspace of co-dimension $r$ in $\PG{k-1,q}$, and let $M$ be an $r\times k$ matrix whose nullspace is $\Pi$. Then $MG$ is an $r\times n$ matrix of full rank whose rows are a basis of a subcode $D\subseteq C$. This gives a one-to-one correspondence between subspaces of codimension $r$ of $\PG{k-1,q}$ and subcodes of $C$ of dimension $r$. This correspondence is independent of the choice of $M$, $G$ and the basis of $D$; see \cite{tsfasman:1995} for details.

\begin{theorem}\label{gen-supp}
Let $D\subseteq C$ be a subcode of dimension $r$, and $\Pi\subseteq\PG{k-1,q}$ the corresponding subspace of codimension $r$. Then a coordinate $i\in[n]$ is in $[n]\setminus\supp{D}$ if and only if the point $P_i\in\mathcal{P}_G$ is in $\Pi$.
\end{theorem}
\begin{proof}
The $i$-th coordinate of $D$ is zero for all words in $D$ if and only if all elements in the basis of $D$ have a zero in the $i$-th coordinate. This happens if and only if the $i$-th column of $G$ is in the nullspace of $M$, or, equivalently, if the point $P_i\in\mathcal{P}_G$ is in $\Pi$.
\end{proof}
\begin{corollary}\label{gen-wt}
Let $D\subseteq C$ be a subcode of dimension $r$, and $\Pi\subseteq\PG{k-1,q}$ the corresponding subspace of codimension $r$. Then the weight of $D$ is equal to $n$ minus the number of points $P_i\in\mathcal{P}_G$ that are in $\Pi$.
\end{corollary}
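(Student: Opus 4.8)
The plan is to read this off directly from Theorem \ref{gen-supp}, since by definition the weight of a subcode is the cardinality of its support, and the set $[n]$ of coordinate positions partitions into $\supp{D}$ and its complement $[n]\setminus\supp{D}$. So the whole content of the corollary is already packaged in the preceding theorem, and what remains is to translate the statement about the complement of the support into a count and subtract from $n$.

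First I would record the elementary identity
\[ \wt{D} = |\supp{D}| = n - |[n]\setminus\supp{D}|, \]
which follows from the definition of the weight of a subcode as the size of its support together with the fact that $\supp{D}$ and $[n]\setminus\supp{D}$ partition the $n$ coordinate positions. Next I would invoke Theorem \ref{gen-supp}, which identifies $[n]\setminus\supp{D}$ precisely as the set of coordinates $i$ for which the point $P_i\in\mathcal{P}_G$ lies in the codimension-$r$ subspace $\Pi$. Substituting this into the identity above gives
\[ \wt{D} = n - |\{i\in[n]:P_i\in\Pi\}|, \]
and the cardinality on the right is exactly the number of points $P_i\in\mathcal{P}_G$ lying in $\Pi$, which completes the argument.

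Since the substantive work has already been carried out in Theorem \ref{gen-supp}, I do not expect a genuine obstacle here; the proof is a one-line consequence. The only point meriting a moment's care is the bookkeeping between coordinate positions $i$ and points $P_i$: because the correspondence $i\mapsto P_i$ attaches to each of the $n$ coordinates a single point of the projective system, counting indices $i$ with $P_i\in\Pi$ coincides with counting the points of $\mathcal{P}_G$ that fall in $\Pi$, so the two cardinalities agree and no multiplicity ambiguity intervenes.
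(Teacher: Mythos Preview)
Your proposal is correct and matches the paper's approach: the corollary is stated without proof precisely because it is immediate from Theorem~\ref{gen-supp} together with the definition $\wt{D}=|\supp{D}|=n-|[n]\setminus\supp{D}|$, exactly as you wrote. Your closing remark about the bijection $i\mapsto P_i$ and multiplicities is a nice bit of care, though in the paper's context the projective system is an enumeration and the count is always by index.
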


\section{Codes form a finite projective space}

Consider the projective system $\mathcal{P}$ that consists of all the points in $\PG{s-1,q}$ without multiplicities. The corresponding code is the Simplex code:

\begin{definition}
The $q$-ary Simplex code $\mathcal{S}_q(s)$ is a linear $[(q^s-1)/(q-1),s]$ code over $\GF{q}$. The columns of the generator matrix of the code are all possible nonzero vectors in $\GF{q}^s$, up to multiplication by a scalar.
\end{definition}

The correspondence between $\mathcal{P}$ and the Simplex code is independent of the choice of a generator matrix. We use this correspondence to determine the extended weight enumerator of the Simplex code. We do this via the generalized weight enumerators.

\begin{theorem}
The generalized weight enumerators of the Simplex code $\mathcal{S}_q(s)$ are, for $0\leq r\leq s$, given by
\[ W_{\mathcal{S}_q(s)}(X,Y)=\left[{s\atop r}\right]_qX^{(q^{s-r}-1)/(q-1)}Y^{(q^s-q^{s-r})/(q-1)} . \]
\end{theorem}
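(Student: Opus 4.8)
The plan is to read off both the weight and the multiplicity of each dimension-$r$ subcode directly from the geometry, using the correspondence established in Theorem~\ref{gen-supp} and Corollary~\ref{gen-wt}. Since $k=s$ here, the subcodes $D\subseteq\mathcal{S}_q(s)$ of dimension $r$ are in one-to-one correspondence with the subspaces $\Pi\subseteq\PG{s-1,q}$ of codimension $r$, that is, with the projective subspaces of dimension $s-1-r$. By Corollary~\ref{gen-wt}, the weight of $D$ equals $n$ minus the number of points of $\mathcal{P}$ that lie in $\Pi$.

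The key observation is that $\mathcal{P}$ consists of \emph{all} the points of $\PG{s-1,q}$, so the number of points of $\mathcal{P}$ inside $\Pi$ is simply the total number of points of $\Pi$, namely $(q^{s-r}-1)/(q-1)$. This count depends only on the dimension of $\Pi$ and not on the particular subspace chosen, so every dimension-$r$ subcode has the \emph{same} weight
\[ \wt{D}=\frac{q^s-1}{q-1}-\frac{q^{s-r}-1}{q-1}=\frac{q^s-q^{s-r}}{q-1}. \]
Consequently $W^{(r)}_{\mathcal{S}_q(s)}(X,Y)$ collapses to a single monomial, whose $X$- and $Y$-exponents are exactly $n-\wt{D}=(q^{s-r}-1)/(q-1)$ and $\wt{D}=(q^s-q^{s-r})/(q-1)$, matching the claimed formula.

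It then remains to compute the coefficient $A^{(r)}_w$, which by the same correspondence is the number of codimension-$r$ subspaces of $\PG{s-1,q}$, equivalently the number of $(s-r)$-dimensional linear subspaces of $\GF{q}^s$. This is the Gaussian binomial coefficient $\left[{s\atop s-r}\right]_q=\left[{s\atop r}\right]_q$, giving the stated coefficient. One verifies this enumeration in the standard way, either by counting ordered bases and dividing by the order of the general linear group, or via the usual recursion for Gaussian binomials.

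I expect no genuine obstacle in this argument: the essential point is the homogeneity observation of the second paragraph, which forces the enumerator to be a monomial, and everything else reduces to the routine subspace count. The only step requiring a little care is keeping track of the two bookkeeping conventions simultaneously, namely codimension versus projective dimension of $\Pi$, and vector-space dimension $s-r$ versus projective dimension $s-1-r$, so that the point count $(q^{s-r}-1)/(q-1)$ and the subspace count $\left[{s\atop r}\right]_q$ are read off consistently.
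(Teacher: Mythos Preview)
Your proposal is correct and follows essentially the same route as the paper: invoke Corollary~\ref{gen-wt}, observe that since $\mathcal{P}$ is all of $\PG{s-1,q}$ every codimension-$r$ subspace $\Pi$ contains exactly $(q^{s-r}-1)/(q-1)$ points of $\mathcal{P}$, and conclude that all dimension-$r$ subcodes share the single weight $(q^s-q^{s-r})/(q-1)$. The paper is terser about the coefficient, but your explicit identification of it with the number of codimension-$r$ subspaces, i.e.\ $\left[{s\atop r}\right]_q$, is exactly the intended count.
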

\begin{proof}
We use Corollary \ref{gen-wt} to determine the weights of all subcodes of $\mathcal{S}_q(s)$. Fix a dimension $r$. Let $D\subseteq\mathcal{S}_q(s)$ be some subcode of dimension $r$, that corresponds to the subspace $\Pi\subseteq\PG{s-1,q}$ of codimension $r$. The weight of $D$ is equal to $n$ minus the number of points in $\mathcal{P}$ that are in $\Pi$. Because all points of $\PG{s-1,q}$ are in $\mathcal{P}$, the weight is the same for all $D$ and it is equal to $n$ minus the total number of points in $\Pi$. This means the weight of $D$ is equal to
\[ \frac{q^s-1}{q-1}-\frac{q^{s-r}-1}{q-1}=\frac{q^s-q^{s-r}}{q-1} \]
and the theorem follows.
\end{proof}

From the previous calculation and Theorem \ref{gen-supp} the next statement follows.

\begin{corollary}
Let $D$ be some subcode of dimension $r$ of the Simplex code $\mathcal{S}_q(s)$. Then the points in $\mathcal{P}$ indexed by $[n]\setminus\supp{D}$ are all the points in the corresponding subspace $\Pi$ of codimension $r$ in $\PG{s-1,q}$.
\end{corollary}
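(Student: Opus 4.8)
The plan is to derive the statement directly from Theorem \ref{gen-supp}, which already establishes the coordinate-by-coordinate correspondence for an arbitrary nondegenerate code, and then to specialize to the Simplex code by exploiting the one feature that distinguishes $\mathcal{P}$: it contains \emph{every} point of $\PG{s-1,q}$, each with multiplicity one.

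First I would invoke Theorem \ref{gen-supp} for the subcode $D$ of dimension $r$ and its corresponding subspace $\Pi$ of codimension $r$: a coordinate $i\in[n]$ lies in $[n]\setminus\supp{D}$ if and only if the point $P_i\in\mathcal{P}$ lies in $\Pi$. This identifies the set of points indexed by $[n]\setminus\supp{D}$ with the set $\{P_i : P_i\in\Pi\}$. Second I would use the defining property of the Simplex code, namely that $\mathcal{P}$ enumerates all nonzero vectors of $\GF{q}^s$ up to scalar, i.e.\ all points of $\PG{s-1,q}$, without multiplicities. Consequently every point of $\Pi$ occurs as exactly one $P_i$, and conversely every $P_i$ lying in $\Pi$ is a point of $\Pi$. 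The assignment $i\mapsto P_i$ therefore restricts to a bijection between $[n]\setminus\supp{D}$ and the point set of $\Pi$, which is exactly the assertion that the points indexed by $[n]\setminus\supp{D}$ are all the points in $\Pi$.

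The argument is short, and the only place requiring care — what I would flag as the main (if modest) obstacle — is the no-multiplicity hypothesis. Theorem \ref{gen-supp} on its own only says the complement of the support collects those indices whose points lie in $\Pi$; if $\mathcal{P}$ repeated a point or omitted one, the indexed collection would fail to coincide with the full point set of $\Pi$. For the Simplex code this degeneracy cannot arise precisely because $\mathcal{P}$ is the set of all points of $\PG{s-1,q}$ counted once, so the identification of $[n]\setminus\supp{D}$ with the points of $\Pi$ is simultaneously injective and surjective. No computation beyond this observation is needed.
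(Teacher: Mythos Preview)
Your proposal is correct and matches the paper's own justification, which simply states that the corollary follows from Theorem \ref{gen-supp} together with the observation (made in the preceding proof) that $\mathcal{P}$ contains every point of $\PG{s-1,q}$. Your explicit unpacking of the bijectivity via the no-multiplicity hypothesis is a slight elaboration, but the underlying argument is identical.
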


We can now write down the extended weight enumerator of the Simplex code:

\begin{theorem}
The extended weight enumerator of the Simplex code $\mathcal{S}_q(s)$ is equal to
\[ W_{\mathcal{S}_q(s)}(X,Y,T)=\sum_{r=0}^s\left(\prod_{j=0}^{r-1}(T-q^j)\right)\left[{s\atop r}\right]_qX^{(q^{s-r}-1)/(q-1)}Y^{(q^s-q^{s-r})/(q-1)} . \]
\end{theorem}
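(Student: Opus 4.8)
The plan is to apply Theorem~\ref{ewe-gwe} directly, feeding it the generalized weight enumerators that were just computed in the preceding theorem. Theorem~\ref{ewe-gwe} expresses the extended weight enumerator of any linear $[n,k]$ code as
\[ W_C(X,Y,T)=\sum_{r=0}^k\left(\prod_{j=0}^{r-1}(T-q^j)\right)W^{(r)}_C(X,Y), \]
so the entire task reduces to substituting the explicit formula for $W^{(r)}_{\mathcal{S}_q(s)}(X,Y)$ into this expression. First I would note that the Simplex code $\mathcal{S}_q(s)$ has dimension $k=s$, so the summation index $r$ runs from $0$ to $s$, which is exactly the range over which the generalized weight enumerators were determined.

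Next I would simply insert the closed form
\[ W^{(r)}_{\mathcal{S}_q(s)}(X,Y)=\left[{s\atop r}\right]_qX^{(q^{s-r}-1)/(q-1)}Y^{(q^s-q^{s-r})/(q-1)} \]
into each term of the sum. Since each $W^{(r)}$ is a single monomial (every subcode of a fixed dimension $r$ has the same weight, as shown via Corollary~\ref{gen-wt}), no collecting or simplification of coefficients is required, and the claimed identity emerges immediately term by term.

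In short, there is no genuine obstacle here: the result is a direct corollary of Theorem~\ref{ewe-gwe} combined with the already-established single-monomial form of the generalized weight enumerators. The only point deserving a remark is the matching of the summation range, namely that $k=s$ so that the upper limit $s$ in the target formula coincides with $k$ in Theorem~\ref{ewe-gwe}; beyond this, the proof is purely a substitution.
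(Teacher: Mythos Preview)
Your proposal is correct and matches the paper's proof essentially line for line: the paper simply invokes Theorem~\ref{ewe-gwe} with $k=s$ and substitutes the single-monomial generalized weight enumerators already computed. There is nothing to add; this is exactly the intended two-line substitution argument.
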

\begin{proof}
We use the correspondence between the generalized and extended weight enumerator in Theorem \ref{ewe-gwe}:
\begin{align*}
W_{\mathcal{S}_q(s)}(X,Y,T) & = \sum_{r=0}^s\left(\prod_{j=0}^{r-1}(T-q^j)\right)W_{\mathcal{S}_q(s)}(X,Y) \\
 & = \sum_{r=0}^s\left(\prod_{j=0}^{r-1}(T-q^j)\right)\left[{s\atop r}\right]_qX^{(q^{s-r}-1)/(q-1)}Y^{(q^s-q^{s-r})/(q-1)}. \\
 & \qedhere
\end{align*}
\end{proof}

In combination with the isomorphism of Proposition \ref{bij} and Corollary \ref{gen-ext-wt}, we get the following consequence.

\begin{corollary}\label{gwe-design}
The points in $\mathcal{P}$ indexed by the complement of the support of a word of weight $(q^s-q^{s-m})/(q-1)$ in the extension code $\mathcal{S}_q(s)\otimes\GF{q^m}$ are all the points in a subspace of $\PG{s-1,q}$ of codimension $m$.
\end{corollary}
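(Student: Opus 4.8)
The plan is to translate the statement about a single word in the extension code into a statement about a subcode of $\mathcal{S}_q(s)$ itself, where the geometric description has already been worked out. First I would invoke Proposition \ref{bij}: fixing an isomorphism, the word $\mathbf{c}\in\mathcal{S}_q(s)\otimes\GF{q^m}$ of weight $(q^s-q^{s-m})/(q-1)$ corresponds to a matrix $M\in\mathcal{S}_q(s)^m$ whose rows lie in $\mathcal{S}_q(s)$. Let $D$ be the subcode of $\mathcal{S}_q(s)$ spanned by these rows. By Corollary \ref{gen-ext-wt} we then have $\supp{\mathbf{c}}=\supp{D}$, and in particular $\wt{D}=(q^s-q^{s-m})/(q-1)$.

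The crux of the argument is to pin down the dimension of $D$. A priori the $m$ rows of $M$ need not be linearly independent over $\GF{q}$, so all that is immediate is $\dim D\leq m$. Here I would use the weight formula established above: a subcode of $\mathcal{S}_q(s)$ of dimension $r$ has weight exactly $(q^s-q^{s-r})/(q-1)$. Since the map $r\mapsto(q^s-q^{s-r})/(q-1)$ is strictly increasing in $r$ (the term $q^{s-r}$ strictly decreases), the weight of a subcode determines its dimension uniquely. As $\wt{D}=(q^s-q^{s-m})/(q-1)$ is precisely the value attained at $r=m$, it follows that $\dim D=m$ exactly; equivalently, the rows of $M$ are linearly independent.

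With $\dim D=m$ in hand, I would finish by applying the earlier corollary describing supports of subcodes of the Simplex code: the coordinates in $[n]\setminus\supp{D}$ index exactly the points of the subspace $\Pi\subseteq\PG{s-1,q}$ of codimension $m$ corresponding to $D$ (this itself rests on Theorem \ref{gen-supp} together with the fact that $\mathcal{P}$ contains every point of $\PG{s-1,q}$). Because $\supp{\mathbf{c}}=\supp{D}$, the complement of $\supp{\mathbf{c}}$ is the same set, and the claim follows. The one genuinely substantive point, as opposed to bookkeeping, is the dimension-counting step: everything hinges on the strict monotonicity of weight in dimension, which guarantees that the prescribed weight forces $D$ to have full dimension $m$ and hence to correspond to a codimension-$m$ subspace rather than to one of higher codimension.
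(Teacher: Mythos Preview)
Your argument is correct and follows exactly the route the paper indicates: the paper merely states that the corollary follows from Proposition~\ref{bij} and Corollary~\ref{gen-ext-wt}, together with the preceding description of supports of subcodes, and you have unpacked precisely that. In fact you supply a detail the paper leaves implicit, namely the monotonicity argument showing that a subcode of weight $(q^s-q^{s-m})/(q-1)$ must have dimension exactly $m$ rather than merely at most $m$; this step is indeed needed to conclude that the corresponding subspace $\Pi$ has codimension $m$ and not something smaller.
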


\begin{example}\label{simplex23}
We consider the Simplex code $\mathcal{S}_2(3)$. It is a binary $[7,3]$ code. Its extended weight enumerator has coefficients
\begin{align*}
A_0(T) & = 1 \\
A_1(T) & = 0 \\
A_2(T) & = 0 \\
A_3(T) & = 0 \\
A_4(T) & = 7(T-1) \\
A_5(T) & = 0 \\
A_6(T) & = 7(T-1)(T-2) \\
A_7(T) & = (T-1)(T-2)(T-4).
\end{align*}
Note that for any code we have $A_0(T)=1$ for the zero word, and all other polynomials are divisible by $(T-1)$ because over a field of size 1 we have only the zero word. In the binary case $T=2$, the polynomials for $A_6(T)$ and $A_7(T)$ vanish and the code has only one nonzero weight. For $T=2^2=4$ still $A_7(T)$ vanishes, it is a two-weight code. For $T=2^3$ and higher extensions we get all the three possible nonzero weights. 
\end{example}

\section{Codes form a finite affine space}

It might sound a bit strange to talk about the projective system coming from an affine space. To solve this, remember that we can construct the finite affine space $\AG{s-1,q}$ by deleting a hyperplane from $\PG{s-1,q}$. So let the projective system $\mathcal{P}$ consists of all points in $\PG{s-1,q}$ minus the points in a hyperplane $H$ of $\PG{s-1,q}$.  Without loss of generality, we can choose $H$ to be the hyperplane $X_1=0$. The corresponding code is (equivalent to) the first order $q$-ary Reed-Muller code, and we can define it in the following way:

\begin{definition}
The first order $q$-ary Reed-Muller code $\mathcal{RM}_q(1,s-1)$ is a linear $[q^{s-1},s]$ code over $\GF{q}$. The generator matrix consists of the all-one row, and the other positions in the columns of the generator matrix are all possible vectors in $\GF{q}^{s-1}$.
\end{definition}

Note that the linear dependence between the columns of the generator matrix is now equal to the dependence between the corresponding affine points: this property is very useful if we want to talk about the matroid associated to the code, see \cite{mphako:2000}. \\

We will use the projective system described above to determine the extended weight enumerator of the first order Reed-Muller code. We do this via the generalized weight enumerators.

\begin{theorem}
The generalized weight enumerators of the first order Reed-Muller code $\mathcal{RM}_q(1,s-1)$ are, for $0<r<s$, given by
\[ W_{\mathcal{RM}_q(1,s-1)}^{(r)}(X,Y)=\left[{s-1\atop r-1}\right]_qY^n+q^r\left[{s-1\atop r}\right]_qX^{q^{s-1-r}}Y^{q^{s-1}-q^{s-1-r}} . \]
The extremal cases are, as always, given by 
\begin{eqnarray*}
W_{\mathcal{RM}_q(1,s-1)}^{(0)}(X,Y) & = & X^n, \\
W_{\mathcal{RM}_q(1,s-1)}^{(s)}(X,Y) & = & Y^n.
\end{eqnarray*}
\end{theorem}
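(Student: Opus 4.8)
The plan is to proceed exactly as in the Simplex case, using Corollary \ref{gen-wt}: a subcode $D$ of dimension $r$ corresponds to a codimension-$r$ subspace $\Pi$ of $\PG{s-1,q}$, and its weight equals $n=q^{s-1}$ minus the number of points of $\mathcal{P}$ lying in $\Pi$. Since here $\mathcal{P}$ is the set of affine points, i.e. the points of $\PG{s-1,q}$ not on the hyperplane $H:X_1=0$, the number of points of $\mathcal{P}$ in $\Pi$ is the total number of points of $\Pi$ minus the number of points in $\Pi\cap H$. The new feature compared to the Simplex code is that this count, and hence the weight, is no longer the same for all subcodes of a fixed dimension $r$: it depends on the position of $\Pi$ relative to $H$. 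The first step is therefore to split the codimension-$r$ subspaces into two types and compute the weight for each.

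The dichotomy is clean: either $\Pi\subseteq H$, or $\Pi\not\subseteq H$, in which case $\Pi\cap H$ is a hyperplane of $\Pi$. In the first case every point of $\Pi$ lies on $H$, so $\Pi$ contains no point of $\mathcal{P}$ and the weight is $n=q^{s-1}$, contributing to the $Y^n$ term. In the second case, writing $\Pi$ as a projective subspace of dimension $s-1-r$ with $(q^{s-r}-1)/(q-1)$ points and $\Pi\cap H$ as one of dimension $s-2-r$, the number of points of $\mathcal{P}$ in $\Pi$ is
\[ \frac{q^{s-r}-1}{q-1}-\frac{q^{s-1-r}-1}{q-1}=q^{s-1-r}, \]
so the weight is $q^{s-1}-q^{s-1-r}$, matching the exponents in the second term (the exponent of $X$ being the number of affine points in $\Pi$, which is $[n]\setminus\supp{D}$ by Theorem \ref{gen-supp}).

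The remaining step is to count the subspaces of each type. Passing to the underlying vector spaces in $\GF{q}^s$, a codimension-$r$ projective subspace is an $(s-r)$-dimensional vector subspace, and $H$ corresponds to a fixed $(s-1)$-dimensional subspace $W$. The subspaces with $\Pi\subseteq H$ are the $(s-r)$-dimensional subspaces of $W$, of which there are $\left[{s-1\atop s-r}\right]_q=\left[{s-1\atop r-1}\right]_q$; this is the coefficient of $Y^n$. The subspaces with $\Pi\not\subseteq H$ are the remaining ones, so their number is $\left[{s\atop r}\right]_q-\left[{s-1\atop r-1}\right]_q$, and the formula follows once we recognize this as $q^r\left[{s-1\atop r}\right]_q$ via the $q$-Pascal recurrence $\left[{s\atop r}\right]_q=\left[{s-1\atop r-1}\right]_q+q^r\left[{s-1\atop r}\right]_q$. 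I expect the main obstacle to be precisely this bookkeeping: correctly pairing the two weight values with the right geometric counts and confirming that the counts assemble via the Gaussian binomial identity, together with handling the intersection behaviour ($\Pi\subseteq H$ versus $\Pi\cap H$ a hyperplane of $\Pi$) cleanly. The extremal cases $r=0$ and $r=s$ are immediate, since then $\Pi$ is the whole space (weight $0$, one subcode) or empty (weight $n$, one subcode), giving $X^n$ and $Y^n$ respectively.
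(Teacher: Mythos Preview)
Your proposal is correct and follows essentially the same route as the paper: the same dichotomy $\Pi\subseteq H$ versus $\Pi\not\subseteq H$, the same weight computation in each case, and the same identification of the coefficient of $Y^n$ as $\left[{s-1\atop r-1}\right]_q$. The only cosmetic difference is in counting the subspaces with $\Pi\not\subseteq H$: the paper identifies these with affine subspaces of codimension $r$ in $\AG{s-1,q}$ and quotes their number as $q^r\left[{s-1\atop r}\right]_q$ directly, whereas you obtain the same number by subtraction and the $q$-Pascal identity $\left[{s\atop r}\right]_q=\left[{s-1\atop r-1}\right]_q+q^r\left[{s-1\atop r}\right]_q$.
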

\begin{proof}
We use Corollary \ref{gen-wt} to determine the weights of all subcodes of $\mathcal{RM}_q(1,s-1)$. Fix a dimension $r$, with $0\leq r\leq s$. Let $D\subseteq\mathcal{RM}_q(1,s-1)$ be some subcode of dimension $r$, that corresponds to the subspace $\Pi\subseteq\PG{s-1,q}$ of codimension $r$. The weight of $D$ is equal to $n$ minus the number of points in $\mathcal{P}$ that are in $\Pi$. There are two possibilities:
\begin{itemize}
\item{$\Pi\subseteq H$;}
\item{$\Pi\not\subseteq H$.}
\end{itemize}
In the first case, we cannot have $r=0$, since then $\Pi$ is the whole of $\PG{s-1,q}$ and this cannot be contained in the hyperplane $H$. So let $r>0$. Now non of the points of $\mathcal{P}$ are in $\Pi$, since no points of $H$ are in $\mathcal{P}$. So $\supp{D}=[n]$ and $\wt{D}=n$. The number of such codes is equal to the number of subspaces of codimension $r-1$ in $H\cong PG(s-2,q)$, this is $\left[s-1\atop r-1\right]_q$. So for $0<r\leq s$ we get the following term for the generalized weight enumerator:
\[ \left[s-1\atop r-1\right]_qY^n. \]
In the second case, we do not have to consider $r=s$, since then $\Pi$ is the empty set and this was already included in the previous case. So let $r<s$. Now $\Pi$ and $H$ intersect in a subspace of codimension $r$ in $H$. The points of $\mathcal{P}$ that are in $\Pi$, are all those points of $\Pi$ that are not in $\Pi\cap H$. By the construction of the affine space $\AG{s-1,q}$, the points of $\Pi\setminus(\Pi\cap H)$ form a subspace of $\AG{s-1,q}$ of codimension $r$. The number of points in such a subspace is $q^{s-1-r}$, so $\wt{D}=n-q^{s-1-r}=q^{s-1}-q^{s-1-r}$. The number of such codes is equal to the number of subspaces of codimension $r$ in $\AG{s-1,q}$, this is $q^r\left[s-1\atop r\right]_q$. So this case gives the following term for the generalized weight enumerator, for $0\leq r<s$:
\[ q^r\left[{s-1\atop r}\right]_qX^{q^{s-1-r}}Y^{q^{s-1}-q^{s-1-r}}. \]
Summing up this two cases leads to the given formulas.
\end{proof}

From the previous calculation and Theorem \ref{gen-supp} the next statement follows.

\begin{corollary}
Let $D$ be some subcode of dimension $r$ of the first order Reed-Muller code $\mathcal{RM}_q(1,s-1)$. Then either $\supp{D}=[n]$, or the points in $\mathcal{P}$ indexed by $[n]\setminus\supp{D}$ are all the points in the corresponding subspace $\Pi$ of codimension $r$ in $\PG{s-1,q}$.
\end{corollary}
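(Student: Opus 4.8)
The plan is to obtain the corollary directly from Theorem~\ref{gen-supp}, feeding in the geometric case distinction already established in the proof of the preceding generalized weight enumerator theorem. The first step is to apply Theorem~\ref{gen-supp}: it tells us that a coordinate $i$ lies in $[n]\setminus\supp{D}$ exactly when the point $P_i\in\mathcal{P}$ lies in the codimension-$r$ subspace $\Pi$ corresponding to $D$. Hence the points of $\mathcal{P}$ indexed by $[n]\setminus\supp{D}$ are precisely the points of $\mathcal{P}$ that lie in $\Pi$, and all that remains is to read off, from the two geometric configurations, which of the two stated alternatives occurs.

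Next I would split into the same two cases as in the weight enumerator proof. If $\Pi\subseteq H$, then because $\mathcal{P}$ contains no point of the deleted hyperplane $H$, no point of $\mathcal{P}$ lies in $\Pi$; the index set $[n]\setminus\supp{D}$ is therefore empty and $\supp{D}=[n]$, which is the first alternative. If instead $\Pi\not\subseteq H$, then $\Pi$ meets $\mathcal{P}$ in exactly those points of $\Pi$ that survive the deletion of $H$, and by the first step these are precisely the points indexed by $[n]\setminus\supp{D}$; this gives the second alternative. The extremal dimensions $r=0$ and $r=s$ slot into these cases automatically and need no separate treatment.

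The only point that needs care is the reading of the phrase ``all the points in the corresponding subspace $\Pi$'': since $\mathcal{P}$ excludes the points of $\Pi\cap H$, the assertion is that $[n]\setminus\supp{D}$ indexes all the points of $\mathcal{P}$ lying in $\Pi$ (equivalently, all points of $\Pi$ belonging to the affine space $\AG{s-1,q}$), not literally every projective point of $\Pi$. With this understood---matching the Simplex corollary, where $\mathcal{P}$ is the entire space and the distinction collapses---the two cases are exhaustive and the corollary follows with no further computation, the genuine content having already been carried out in the weight enumerator proof.
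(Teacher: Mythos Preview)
Your proposal is correct and follows exactly the paper's approach: the paper simply states that the corollary follows ``from the previous calculation and Theorem~\ref{gen-supp}'', and you have spelled out precisely that derivation, invoking Theorem~\ref{gen-supp} and the two-case split $\Pi\subseteq H$ versus $\Pi\not\subseteq H$ from the preceding proof. Your remark about interpreting ``all the points in $\Pi$'' as the points of $\mathcal{P}$ lying in $\Pi$ (equivalently, the affine points of $\Pi$) is a helpful clarification of the intended reading.
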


We can now write down the extended weight enumerator of the first order Reed-Muller code:

\begin{theorem}
The extended weight enumerator of the first order Reed-Muller code $\mathcal{RM}_q(1,s-1)$ is equal to
\begin{multline*}
W_{\mathcal{RM}_q(1,s-1)}(X,Y,T)=\sum_{r=1}^s\left(\prod_{j=0}^{r-1}(T-q^j)\right)\left[{s-1\atop r-1}\right]_qY^n \\
+\sum_{r=0}^{s-1}\left(\prod_{j=0}^{r-1}(T-q^j)\right)q^r\left[{s-1\atop r}\right]_qX^{q^{s-1-r}}Y^{q^{s-1}-q^{s-1-r}}.
\end{multline*}
\end{theorem}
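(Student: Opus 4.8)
The plan is to proceed in exactly the same way as for the Simplex code: apply the correspondence between the generalized and extended weight enumerators from Theorem~\ref{ewe-gwe}, and substitute the generalized weight enumerators computed in the previous theorem. Since $\mathcal{RM}_q(1,s-1)$ has dimension $k=s$, Theorem~\ref{ewe-gwe} immediately gives
\[ W_{\mathcal{RM}_q(1,s-1)}(X,Y,T)=\sum_{r=0}^s\left(\prod_{j=0}^{r-1}(T-q^j)\right)W^{(r)}_{\mathcal{RM}_q(1,s-1)}(X,Y), \]
so the entire content of the theorem is the reorganization of this single sum into the two sums in the statement. First I would substitute, for $0<r<s$, the two-term expression for the generalized weight enumerator, which naturally splits each summand into a $Y^n$-contribution and an $X^{q^{s-1-r}}Y^{q^{s-1}-q^{s-1-r}}$-contribution, and then group all $Y^n$-contributions into one family and all mixed contributions into the other.

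The key step is to verify that the two extremal cases $r=0$ and $r=s$ fit seamlessly into these two families, so that no separate terms remain. At $r=0$ we have $W^{(0)}=X^n$; since $n=q^{s-1}$ and $\left[{s-1\atop0}\right]_q=1$, this is exactly the $r=0$ term $q^0\left[{s-1\atop0}\right]_qX^{q^{s-1}}Y^0$ of the second family. At $r=s$ we have $W^{(s)}=Y^n$; since $\left[{s-1\atop s-1}\right]_q=1$, this is exactly the $r=s$ term $\left[{s-1\atop s-1}\right]_qY^n$ of the first family. Conversely, the ``wrong'' boundary pieces vanish automatically: the mixed piece at $r=s$ carries the factor $\left[{s-1\atop s}\right]_q=0$, and the $Y^n$-piece at $r=0$ would carry $\left[{s-1\atop -1}\right]_q=0$. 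Consequently the first family can be summed over $1\le r\le s$ and the second over $0\le r\le s-1$, which reproduces precisely the two sums in the statement.

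I expect the only real obstacle to be the bookkeeping at the boundary: one must check that no term is double-counted and that the extremal generalized weight enumerators $X^n$ and $Y^n$ are correctly absorbed into boundary terms of the two families, using the conventions $\left[{s-1\atop0}\right]_q=\left[{s-1\atop s-1}\right]_q=1$ together with the vanishing of out-of-range Gaussian binomial coefficients. Once this index-matching is settled, the proof is a direct substitution with no genuine computation, mirroring the Simplex-code argument.
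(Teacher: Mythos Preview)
Your proposal is correct and follows essentially the same route as the paper: apply Theorem~\ref{ewe-gwe}, substitute the previously computed generalized weight enumerators, and regroup the $Y^n$-terms and the mixed terms into the two displayed sums. Your treatment of the boundary cases $r=0$ and $r=s$ is in fact more explicit than the paper's, which simply separates off $X^n$ and $Y^n$ and then absorbs them back; the content is identical.
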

\begin{proof}
We use the correspondence between the generalized and extended weight enumerator in Theorem \ref{ewe-gwe}:
\begin{align*}
W_{\mathcal{RM}_q(1,s-1)}(X,Y,T) & = \sum_{r=0}^s\left(\prod_{j=0}^{r-1}(T-q^j)\right)W_{\mathcal{S}_q(s)}(X,Y) \\
 & = X^n+\sum_{r=1}^{s-1}\left(\prod_{j=0}^{r-1}(T-q^j)\right)\left(\left[{s-1\atop r-1}\right]_qY^n\right. \\
  & \phantom{= }\left.+q^r\left[{s-1\atop r}\right]_qX^{q^{s-1-r}}Y^{q^{s-1}-q^{s-1-r}}\right)+Y^n \\
 & = \sum_{r=0}^{s-1}\left(\prod_{j=0}^{r-1}(T-q^j)\right)q^r\left[{s-1\atop r}\right]_qX^{q^{s-1-r}}Y^{q^{s-1}-q^{s-1-r}} \\
 & \phantom{= }+\sum_{r=1}^s\left(\prod_{j=0}^{r-1}(T-q^j)\right)\left[{s-1\atop r-1}\right]_qY^n. \\
 & \qedhere
\end{align*}
\end{proof}

In combination with the isomorphism of Proposition \ref{bij} and Corollary \ref{gen-ext-wt}, we get the following consequence.

\begin{corollary}\label{ewe-design}
The points in $\mathcal{P}$ indexed by the complement of the support of a word of weight $q^{s-1}-q^{s-1-m}$ in the extension code $\mathcal{RM}_q(1,s-1)\otimes\GF{q^m}$ are all the points in a subspace of $\AG{s-1,q}$ of codimension $m$.
\end{corollary}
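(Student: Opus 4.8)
The plan is to reduce the statement about a word in the extension code to a statement about a subcode of the base code, using Proposition~\ref{bij} and Corollary~\ref{gen-ext-wt}, and then to read off the geometry from the subcode description already obtained for $\mathcal{RM}_q(1,s-1)$.

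First I would fix a primitive element $\alpha\in\GF{q^m}$ and let $\mathbf{c}\in\mathcal{RM}_q(1,s-1)\otimes\GF{q^m}$ be a word of weight $q^{s-1}-q^{s-1-m}$. By Proposition~\ref{bij}, $\mathbf{c}$ corresponds to an $m\times n$ matrix $M\in C^m$ whose rows lie in $C=\mathcal{RM}_q(1,s-1)$; let $D\subseteq C$ be the subcode generated by $M$. Corollary~\ref{gen-ext-wt} then gives $\supp{\mathbf{c}}=\supp{D}$, and hence $\wt{D}=q^{s-1}-q^{s-1-m}$.

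Next I would determine the dimension of $D$. By the generalized weight enumerator theorem for $\mathcal{RM}_q(1,s-1)$, a subcode of dimension $r$ has weight either $n=q^{s-1}$ (when the associated subspace $\Pi$ satisfies $\Pi\subseteq H$) or $q^{s-1}-q^{s-1-r}$ (when $\Pi\not\subseteq H$). Since $r\mapsto q^{s-1-r}$ is strictly decreasing and $q^{s-1}-q^{s-1-m}<n$, the value $q^{s-1}-q^{s-1-m}$ can be attained only with $\dim D=m$ and $\Pi\not\subseteq H$; in particular $\supp{D}\neq[n]$. Finally I would invoke the corollary describing the supports of subcodes of $\mathcal{RM}_q(1,s-1)$: since $\supp{D}\neq[n]$, the points of $\mathcal{P}$ indexed by $[n]\setminus\supp{D}$ are exactly the points of $\mathcal{P}$ lying in the codimension-$m$ subspace $\Pi$. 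Retracing the $\Pi\not\subseteq H$ case of the weight computation, these are precisely the affine points $\Pi\setminus(\Pi\cap H)$, which form a subspace of $\AG{s-1,q}$ of codimension $m$. As $\supp{\mathbf{c}}=\supp{D}$, the same conclusion holds for $\mathbf{c}$, which is the assertion.

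The step I expect to be the main obstacle is the dimension-determination: one must argue from the weight to the dimension rather than the reverse, since a priori the rows of $M$ could be linearly dependent and generate a subcode of dimension strictly less than $m$. The weight hypothesis rules this out precisely because each admissible weight $q^{s-1}-q^{s-1-r}$ occurs for exactly one dimension $r$, and the exceptional weight $n$ occurs only in the $\Pi\subseteq H$ branch; establishing this unambiguity cleanly is the crux of the argument.
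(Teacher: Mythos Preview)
Your proposal is correct and follows exactly the route the paper indicates: invoke Proposition~\ref{bij} and Corollary~\ref{gen-ext-wt} to pass from the word $\mathbf{c}$ to a subcode $D$ with the same support, then read off the geometry from the subcode description already established for $\mathcal{RM}_q(1,s-1)$. The dimension-determination step you flag as the crux is indeed needed and is handled correctly by your injectivity argument; the paper leaves this implicit, so your write-up is in fact more complete than the paper's one-line justification.
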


\begin{example}
We consider the Reed-Muller code $\mathcal{RM}_2(1,3)$. It is a binary $[8,4]$ code. Its extended weight enumerator has coefficients
\begin{align*}
A_0(T) & = 1 \\
A_1(T) & = 0 \\
A_2(T) & = 0 \\
A_3(T) & = 0 \\
A_4(T) & = 14(T-1) \\
A_5(T) & = 0 \\
A_6(T) & = 28(T-1)(T-2) \\
A_7(T) & = 8(T-1)(T-2)(T-4) \\
A_8(T) & = (T-1)(T^3-7T^2+21T-21).
\end{align*}
As noted in Example \ref{simplex23}, for any code we have $A_0(T)=1$ for the zero word, and all other polynomials are divisible by $(T-1)$ because over a field of size 1 we have only the zero word. In the binary case $T=2$, the polynomials for $A_6(T)$ and $A_7(T)$ vanish and we get a two-weight code. For $T=2^2=4$ still $A_7(T)$ vanishes. For $T=2^3$ and higher extensions we get all the four possible nonzero weights. \\
This example and the previous Example \ref{simplex23} also illustrate that the binary Simplex code $\mathcal{S}_2(s)$ is equivalent to the binary Reed-Muller code $\mathcal{RM}_2(1,s)$ shortened at the first coordinate.
\end{example}

\section{Links to other problems}

We found direct formulas for the extended weight enumerator of the $q$-ary Simplex code and the $q$-ary first order Reed-Muller code. Following from this calculations, we found the geometrical structure of the supports of the subcodes and of words in extension codes. This triggers a lot of links with other problems in discrete mathematics and coding theory. The following list is by no means exhaustive, but it hopefully serves as encouragement and inspiration for further research. \\

Mphako \cite{mphako:2000} calculated the Tutte polynomial of the matroids coming from finite projective and affine spaces. He does this by using the equivalence between the Tutte polynomial and the coboundary polynomial. The coboundary polynomial of a matroid is also the reciprocal polynomial of the extended weight enumerator of the code associated to a matroid, see \cite{jurrius:2011} for details. The formulas found by Mphako indeed coincide with the extended weight enumerators in this paper. \\

We calculated the extended weight enumerator for the first order (generalized) Reed-Muller code. The weight enumeration of higher order Reed-Muller codes is an open problem. The generalized Hamming weights of $q$-ary Reed-Muller codes were found by Heijnen and Pellikaan \cite{heijnen:1998}. \\

It is known that the $r$-th order Reed-Muller codes $\mathcal{RM}(r,m)$ arise from the design of points and subspaces of codimension $r$ in the affine space $\AG{m,2}$, see \cite{assmus:1992b}. The $q$-ary analogue of this statement is treated in \cite{assmus:1992a}. In Corollary \ref{ewe-design} we saw that the supports of the words in the extension code $\mathcal{RM}_q(1,s-1)\otimes\GF{q^r}$ contain the design of points and subspaces of codimension $r$ in $\AG{s-1,q}$. This suggests some kind of link between extension codes of the first order Reed-Muller code and the higher order Reed-Muller codes. If we can make this link explicit, it might lead to more insights to the weight enumeration of higher order Reed-Muller codes. \\

We encountered two types of two-weight codes in this paper: the first order Reed-Muller code, and the extension of the Simplex code $\mathcal{S}_q(s)\otimes\GF{q^2}$. How do these codes fit into the classification of two-weight codes from Calderbank and Kantor \cite{calderbank:1986}? \\

For every design, one can talk about its $p$-rank. Tonchev \cite{tonchev:1999} generalized this concept to the \emph{dimension} of a design, and formulated an analogue of the Hamada conjecture. Knowing the extended weight enumerator of the Simplex code could be of help in proving this conjecture for geometric designs.

\section*{Acknowledgment}

The author would like to thank Vladimir Tonchev for coming up with the question about the weight enumerator of the extension codes of the Simplex code, and for his encouraging conversations on the subject.

\bibliographystyle{abbrv}
\bibliography{wtenum}

\end{document}